\newtheorem{theorem}{Theorem}
\newtheorem{lemma}{Lemma}
\newtheorem{remark}{Remark}
\newtheorem{definition}{Definition}
\begin{document}

\title{On Tame Subgroups of Finitely Presented Groups}
\author{Rita Gitik}
\email{ritagtk@umich.edu}
\address{Department of Mathematics \\ University of Michigan \\ Ann Arbor, MI, 48109}  

\date{\today}

\begin{abstract} 
We prove that the free product of two finitely presented locally tame groups
is locally tame and describe many examples of tame subgroups of finitely presented groups.
We also include some open problems related to tame subgroups.
\end{abstract}

\subjclass[2010]{Primary: 20F65; Secondary: 20E06, 57M20, 57M10, 20F34}
\maketitle

\section{Introduction}

A $3$-manifold is called topologically tame if it is homeomorphic to the 
interior of a compact $3$-manifold.

Marden's tameness conjecture  \cite{Ma} states that any hyperbolic $3$-manifold
with finitely generated fundamental group is topologically tame.
The tameness conjecture became one of the central questions
in the theory of hyperbolic $3$-manifolds. The conjecture has been established
by Agol in \cite{Ag} and, independently,  by Calegari and Gabai in \cite{C-G}.
Alternative proofs of the conjecture were given by Soma in \cite{So} and Bowditch 
in \cite{Bo}.

The tameness conjecture is closely related to Simon's missing boundary 
manifold conjecture.

A $3$-manifold $M$ is called a missing boundary manifold  if it can be 
embedded in a compact manifold $\bar M$  such that $\bar M - M$
is a closed subset of the boundary of $\bar M$. 

Simon conjectured in \cite{Sim} that if $M_0$ is a compact
orientable irreducible $3$-manifold, and $M$ is the cover of $M_0$
corresponding to  a finitely generated subgroup of $\pi_1(M_0)$, then
$M$ is a missing boundary manifold. 

Long and Reid proved that  Marden's conjecture implies Simon's 
conjecture for $3$-manifolds which admit a geometric decomposition.
Their proof appeared in \cite{Ca}.

The resolution of Thurston's Geometrization Conjecture by Perelman in 2003 showed that
Simon's conjecture holds for all compact orientable irreducible $3$-manifolds.
Accounts of Perelman's work were given by many mathematicians, see for example, 
\cite{B-B-B-M-P} and \cite{K-L1}. 

Tucker proved in \cite{Tuc}  
that  a non-compact orientable irreducible $3$-manifold $M$ is a missing 
boundary manifold  if and only if for any compact submanifold $C$ 
of $M$ the fundamental group of any connected component of $M - C$  is finitely
generated. This observation made it possible to reduce the missing
boundary manifold conjecture  to a group-theoretic problem.

Mihalik introduced the notion of a $1$-tame pair of groups in \cite{Mi1}. 
His approach resulted in various group-theoretical results which implied some
special cases of the tame ends conjecture, (cf. \cite{Mi1}, \cite{Mi2}).
The author  introduced in \cite{Gi} a different, but equivalent, definition of a 
tame subgroup. 

Let $H$ be a subgroup of a group $G$.
Choose the  presentation $G= \langle X|R \rangle $. 
Let $T$ be the standard presentation $2$-complex of $G$, i.e. $T$ has one 
vertex, $T$ has an edge, which is a loop, for any generator $x \in X$, and $T$ has a $2$-cell
for any relator $r \in R$. The Cayley complex of $G$, denoted by $Cayley_2(G)$, is  
the universal cover of $T$. Denote by $Cayley_2(G,H)$ the cover of $T$ corresponding to
a subgroup $H$ of $G$.

\begin{definition}
A finitely generated subgroup $H$ of a finitely presented group  
$G$ is tame in $G$ if for any finite subcomplex
$C$ of $Cayley_2(G,H)$ and for any component $K$ of $Cayley_2(G,H) -C$ the group
$\pi_1(K)$ is finitely generated.
\end{definition}

The concept of a tame subgroup is  of independent 
interest. For example, it is not known if there exists a finitely generated 
subgroup $H$ of a finitely presented group $G$ 
such that $H$ is not tame in $G$.

Mihalik demonstrated in \cite{Mi1} that Definition 1 is equivalent to the definition
of a $1$-tame pair $(G,H)$ given in \cite{Mi1}.
He also showed in \cite{Mi1} that $H$ is tame in $G$ if for one large finite subcomplex $C$
of $Cayley_2(G,H)$ the fundamental groups of the connected components of
$Cayley_2(G,H) - C$ are finitely generated. The complex
$C$ can be chosen as a ball around the basepoint $H \cdot 1$.

The following resut was proved by Mihalik in \cite{Mi1}.

\begin{theorem}(Mihalik1)
Let $X$ be a finite polyhedron with $\pi_1(X)=G$ and let $\tilde{X}$ be its universal cover.
Let $H$ be a finitely generated subgroup of $G$ and let $H \setminus \tilde{X}$ be
the  quotient of $\tilde{X}$ by the action of $H$. Then $H$ is tame in $G$ if and only if
for each finite subcomplex $C$ of $H \setminus \tilde{X}$ the fundamental group of every
connected component of $((H \setminus \tilde{X}) - C)$ is finitely generated.
\end{theorem}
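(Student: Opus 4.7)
The plan is to compare the $H$-cover $H \setminus \tilde{X}$ with the cover $Cayley_2(G,H)$ of the standard presentation $2$-complex $T$ of $G$, and to show that the finite generation of fundamental groups of complements is an invariant under a suitable proper homotopy equivalence between the two covers. Since when $X=T$ the quotient $H \setminus \tilde{X}$ coincides with $Cayley_2(G,H)$, this will identify the condition in the statement with Definition 1.

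First, since both $X$ and $T$ are finite complexes with fundamental group $G$, I would choose cellular maps $f : T \to X$ and $g : X \to T$ realizing the identity isomorphism on $\pi_1$, together with cellular homotopies $gf \simeq \mathrm{id}_T$ and $fg \simeq \mathrm{id}_X$. Lifting these to the universal covers gives $\pi_1(X)$-equivariant cellular maps, which descend to cellular maps $\bar{f} : Cayley_2(G,H) \to H\setminus\tilde{X}$ and $\bar{g} : H\setminus\tilde{X} \to Cayley_2(G,H)$ together with cellular proper homotopies $\bar{g}\bar{f}\simeq \mathrm{id}$ and $\bar{f}\bar{g}\simeq \mathrm{id}$. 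Properness is immediate: each cell in either cover sits over a single cell downstairs, and the base complexes are finite, so preimages of finite subcomplexes are finite.

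The heart of the argument is to transfer the hypothesis across $\bar{f}$ and $\bar{g}$. Given any finite subcomplex $C \subset H\setminus \tilde{X}$, I would form $C' := \bar{g}(C) \cup N$, where $N$ is a finite "thickening" absorbing the tracks of the proper homotopy from $\bar{f}\bar{g}$ to $\mathrm{id}$ over $C$; $C'$ is finite by properness. Then any connected component $K'$ of $Cayley_2(G,H) - C'$ is carried by $\bar{f}$ into a single component $K$ of $(H\setminus\tilde{X}) - C$, and any loop in $K$ is properly homotopic, via the homotopy $\bar{f}\bar{g}\simeq \mathrm{id}$, to the $\bar{f}$-image of a loop in a component of $Cayley_2(G,H) - C'$ whose image lies in $K$. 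Combined with the observation recalled in the excerpt (that tameness can be tested on a single sufficiently large ball, hence on any finite subcomplex), this lets me identify a finite generating set for $\pi_1(K)$ out of generating sets of the relevant component fundamental groups upstairs. The reverse direction is symmetric, using $\bar{g}$ in place of $\bar{f}$.

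The main obstacle, and the step I expect to take the most care with, is the surjectivity assertion in the transfer: showing that every element of $\pi_1$ of a component $K$ of $(H\setminus\tilde{X}) - C$ is represented by the $\bar{f}$-image of a loop in some component of $Cayley_2(G,H) - C'$. This is where the uniform control on the homotopy $\bar{f}\bar{g}\simeq \mathrm{id}$ (which exists because the homotopy lives in the finite complex $X$ and lifts equivariantly) must be used to guarantee that pushing a loop across the homotopy never leaves $K$, which in turn dictates how large the thickening $N$ must be. Once this is established, the equivalence of the two conditions, and hence the theorem, follows.
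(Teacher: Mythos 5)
This statement is quoted from Mihalik's paper \cite{Mi1}; the present paper gives no proof of it, so there is nothing internal to compare your argument against, and it must be judged on its own.

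Your overall strategy --- compare $H\setminus\tilde X$ with $Cayley_2(G,H)$ via proper comparison maps, lift controlled homotopies, and thicken $C$ to absorb homotopy tracks --- is the right one, and your identification of the surjectivity step as the delicate point is apt. But there is a genuine gap at the very first step: you assume cellular maps $f:T\to X$ and $g:X\to T$ with $gf\simeq \mathrm{id}_T$ and $fg\simeq \mathrm{id}_X$, i.e.\ that $T$ and $X$ are homotopy equivalent. Two finite complexes with isomorphic fundamental groups need not be homotopy equivalent: already $T\vee S^2$ has the same $\pi_1$ as $T$ but is generally not homotopy equivalent to it, and the theorem allows $X$ to be \emph{any} finite polyhedron with $\pi_1(X)=G$. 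Worse, when $\dim X\ge 3$ and $\pi_2(T)\ne 0$, even the existence of a map $g:X\to T$ inducing the prescribed isomorphism can be obstructed on the $3$-cells of $X$. Since your entire transfer argument leans on the homotopies $gf\simeq\mathrm{id}$ and $fg\simeq\mathrm{id}$ to get the \emph{uniform} (hence properly liftable) control on loops --- you cannot substitute the mere fact that $fg$ induces the identity on $\pi_1$, because a loop-by-loop homotopy carries no properness --- the argument as written does not get off the ground.

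The standard repair is to avoid demanding a homotopy equivalence: given any cellular $f:T\to X$ inducing the isomorphism on $\pi_1$, pass to the mapping cylinder $M_f$, a finite complex containing $T$ as a $\pi_1$-isomorphic subcomplex and deformation retracting to $X$. This reduces the theorem to the single claim that property $*$ of the $H$-cover is unchanged when one finite complex is a $\pi_1$-isomorphically embedded subcomplex of another; there the extra cells upstairs can be pushed into the subcomplex by an honest (hence equivariant and proper) deformation, and a version of your thickening argument then goes through. You should also be aware that cells of dimension $\ge 3$ cannot simply be discarded, since an open higher-dimensional cell lying in the complement of $C$ genuinely affects $\pi_1$ of that complement. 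With the mapping cylinder reduction in place, the remainder of your outline is essentially Mihalik's argument.
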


Theorem (Mihalik1) implies that  Definition 1 is independent of a finite
presentation of the group $G$.
 
Following standard terminology, we introduce the following definition:

\begin{definition}
A group $G$ is locally tame if all finitely generated subgroups of $G$
are tame in $G$.
\end{definition}

The main result of this paper is the following theorem.

\begin{theorem}
If $H$ and $K$ are finitely presented locally tame groups, then
the free product $G = H*K$ is locally tame.
\end{theorem}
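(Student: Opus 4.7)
The plan is to invoke Theorem~(Mihalik1) with the finite polyhedron $X = T_H \vee T_K$ obtained by wedging the presentation $2$-complexes $T_H$ and $T_K$ of $H$ and $K$ at their basepoints. Since $\pi_1(X) = H * K = G$, the theorem reduces the question to the following: for every finitely generated subgroup $U \leq G$ and every finite subcomplex $C$ of $Y := U \setminus \tilde{X}$, I would need to show that each component of $Y - C$ has finitely generated fundamental group.

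The universal cover $\tilde{X}$ carries a natural tree-of-spaces structure whose underlying tree is the Bass--Serre tree $T^*$ of the splitting $G = H * K$: the two types of vertices of $T^*$ support copies of $Cayley_2(H)$ and $Cayley_2(K)$, and the edge spaces are wedge points. Passing to the quotient, $Y$ inherits a graph-of-spaces structure with point edge spaces over $\Gamma := U \setminus T^*$; the vertex space over $v \in \Gamma$ is the cover of $T_H$ (or $T_K$) corresponding to the $U$-stabilizer of a lift of $v$. By the Kurosh subgroup theorem applied to the finitely generated group $U$, only finitely many of these stabilizers are non-trivial, and each such is a finitely generated subgroup of $H$ or of $K$; the vertices with non-trivial stabilizer span a finite ``core'' subgraph $\Gamma_0 \subset \Gamma$, and $b_1(\Gamma)$ equals the rank of the free Kurosh factor of $U$, hence is finite. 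Local tameness of $H$ and $K$ now gives that every vertex space of $Y$ is tame: for $v \in \Gamma_0$ this is local tameness applied to a non-trivial finitely generated subgroup; for $v \notin \Gamma_0$ it is local tameness applied to the trivial subgroup, so that the vertex space is all of $Cayley_2(H)$ or $Cayley_2(K)$.

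Now fix a finite subcomplex $C$ of $Y$. It meets only finitely many vertex spaces, and for each such $V$ the complement $V - (V \cap C)$ breaks into finitely many pieces with finitely generated fundamental group, by tameness of $V$. Let $W$ be a component of $Y - C$. Then $W$ inherits a graph-of-spaces structure (still with point edge spaces) over a graph $\Gamma_W$ obtained from a connected subgraph of $\Gamma$ by deleting the edges whose attaching points lie in $C$ and by splitting each vertex $v$ with $V_v \cap C \neq \emptyset$ into one vertex per piece of $V_v - (V_v \cap C)$. A short Euler-characteristic computation shows that neither operation can raise the first Betti number, so $b_1(\Gamma_W) \leq b_1(\Gamma) < \infty$ and $\pi_1(\Gamma_W)$ is a finitely generated free group. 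Only finitely many vertex spaces of $\Gamma_W$ can carry non-trivial fundamental group: the pieces meeting $C$ and the entire core vertex spaces in $\Gamma_0 \cap \Gamma_W$; all remaining vertex spaces are simply connected copies of $Cayley_2(H)$ or $Cayley_2(K)$. Because the edge spaces of $W$ are points, the Bass--Serre / van~Kampen theorem identifies $\pi_1(W)$ as the free product of $\pi_1(\Gamma_W)$ with these finitely many finitely generated groups, which is itself finitely generated, as desired.

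I expect the main obstacle to be the bookkeeping of the graph-of-spaces structure, and in particular the verification that splitting a vertex and deleting edges in $\Gamma$ cannot increase the first Betti number (the decisive input here being the finite generation of $U$ and its Kurosh decomposition). A secondary but essential point is that local tameness must be applied to the \emph{trivial} subgroup of $H$ and of $K$, in order to handle the many non-core vertex spaces of $Y$ that are entire copies of $Cayley_2(H)$ or $Cayley_2(K)$.
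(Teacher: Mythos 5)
Your proposal is correct and follows essentially the same route as the paper: decompose $Cayley_2(G,S)$ (your $Y$) into pieces lying over the Bass--Serre tree, invoke the Kurosh subgroup theorem to see that all but finitely many of these pieces are full copies of $Cayley_2(H)$ or $Cayley_2(K)$ while the exceptional ones are $Cayley_2(H,H_i)$ or $Cayley_2(K,K_i)$ for finitely generated $H_i, K_i$, and then apply local tameness of $H$ and $K$ (including to the trivial subgroup) to each piece meeting the finite subcomplex $C$. The only difference is that you carry out explicitly the final assembly step --- the Betti-number bookkeeping for the quotient graph and the van Kampen argument showing $\pi_1$ of a complementary component is a finite free product of finitely generated groups --- which the paper compresses into its last sentence.
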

 
\section{Preliminaries}

\begin{remark}
It is not known if the trivial subgroup is tame in any finitely presented group.
Moreover, it is not known if there exists an infinitely presented group 
such that the trivial subgroup is not tame in it.
\end{remark}

\begin{remark}
The trivial subgroup is
tame in any finitely generated abelian group (possibly with torsion) $A$.
Indeed, $A$ is the fundamental group of a finite polyhedron $X$ of the form
$(S^1)^n  \times X_{m_1} \times \cdots \times X_{m_k}$,
where $X_{m_i}$ is homeomorphic to a circle with a $2$-disc attached by a degree
$m_i$ map. The universal cover  $\tilde{X}$ of $X$ is homeomorphic to $R^n \times M$,
where $M$ is a compact set. Let $C$ be a compact subset of $\tilde{X}$. We want to show
that all components of $\tilde{X}-C$ have finitely generated fundamental groups.
We can replace $C$ by a larger compact set $K=p^{-1}p(C)$, where $p$ is the projection map
from $R^n \times M$ onto $R^n$. The space $(R^n \times M) -K$ has components $(R^n-p(C))\times M$,
which have finitely generated fundamental groups. Hence
Theorem (Mihalik1) implies the result.
\end{remark}

\begin{remark}
The trivial subgroup is tame in any negatively curved group, cf. \cite{Gi} and \cite{Mi1}.
\end{remark}

\begin{remark}
If $H$ is a finite index subgroup of a finitely presented group $G$,
then the trivial subgroup is tame in $H$ if and only if it is tame in $G$.

Indeed, let $X$ be a finite polyhedron with $\pi_1(X)=G$ and let $X_H$
be a finite cover of $X$ with $\pi_1(X_H)=H$. Note that the
universal cover $\tilde{X}$ of $X$ is also a universal cover of $X_H$ and $X_H$ is
a finite polyhedron. Hence Theorem (Mihalik1) implies the result.
\end{remark}

\begin{lemma}
Let $G=K \times Z$, where $K$ is a locally tame finitely presented group.
Let $t$ be a generator of $Z$ and let $H$ be a finitely presented subgroup of $G$
of the following form: $H= (H \cap K) \times t^i, i > 0$. 
If $H \cap K$ is finitely generated, then $H$ is tame in $G$.
\end{lemma}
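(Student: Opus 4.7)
The plan is to generalize the product-and-project argument from Remark 2, replacing the role of $\tilde X = \mathbb{R}^n \times M$ by a cover of a suitable product polyhedron.

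First, choose a finite presentation $2$-complex $Y$ for $K$, so that $W := Y \times S^1$ is a finite polyhedron with $\pi_1(W) = K \times Z = G$. The key observation is that the subgroup $H = (H \cap K) \times \langle t^i \rangle$ respects the direct product decomposition of $G$; consequently, the cover of $W$ corresponding to $H$ splits as a product
\[
W_H \;=\; Y_{H \cap K} \times S^1_i,
\]
where $Y_{H \cap K}$ is the cover of $Y$ corresponding to $H \cap K \leq K$ and $S^1_i$ is the compact $i$-fold cover of $S^1$ (again a circle).

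Second, endow $W_H$ with the product CW-structure and let $p : W_H \to Y_{H \cap K}$ denote the first-factor projection. Following Remark 2, given any finite subcomplex $C$ of $W_H$, enlarge it to $C' := p(C) \times S^1_i$. Since $S^1_i$ has finitely many cells, $C'$ is a finite subcomplex of $W_H$ containing $C$, and its complement equals $(Y_{H \cap K} - p(C)) \times S^1_i$; since $S^1_i$ is connected, the connected components of this complement are exactly the products $D_\alpha \times S^1_i$, where $\{D_\alpha\}$ are the components of $Y_{H \cap K} - p(C)$.

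Third, invoke local tameness of $K$: because $H \cap K$ is finitely generated and $K$ is locally tame, $H \cap K$ is tame in $K$. Applying Theorem (Mihalik1) in the forward direction to $Y$ and $H \cap K$, each $\pi_1(D_\alpha)$ is finitely generated, hence so is $\pi_1(D_\alpha \times S^1_i) = \pi_1(D_\alpha) \times \mathbb{Z}$. Since subcomplexes of the form $p(C) \times S^1_i$ are cofinal among the finite subcomplexes of $W_H$, the ``one large subcomplex suffices'' criterion recorded in the introduction (combined with Theorem (Mihalik1) to move between our polyhedron $W$ and the Cayley complex of $G$) yields that $H$ is tame in $G$. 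The argument is essentially routine; the only delicate point is the product decomposition of the cover, which is standard for direct-product subgroups of a direct-product group, so no substantial obstacle is anticipated.
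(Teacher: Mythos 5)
Your proposal is correct and follows essentially the same route as the paper: take $W = Y \times S^1$, observe that the cover corresponding to $H$ is $((H\cap K)\backslash\tilde Y)\times S^1$ (your $Y_{H\cap K}\times S^1_i$ is the same space, since the $i$-fold cover of $S^1$ is again a circle), enlarge a finite subcomplex to a product subcomplex, and apply local tameness of $K$ together with Theorem (Mihalik1). The only cosmetic difference is that you phrase the circle factor as the $i$-fold cover $S^1_i$, which the paper absorbs by simply writing $S^1$.
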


\begin{proof}
Let $Y$ be a finite polyhedron with $\pi_1(Y)=K$ and let $X=Y \times S^1$.
Then $\pi_1(X)=G$.

Let $\tilde{X}$ be the universal covering space of $X$ and let $\tilde{Y}$ be the universal covering space
of $Y$. Note that $\tilde{X}$ is homeomorphic to $\tilde{Y} \times R$.
The quotient of $\tilde{X}$ by the action of $H$ is homeomorphic to $((H \cap K) \setminus \tilde{Y} )\times S^1$.

Let $C_0$ be a finite subcomplex of $((H \cap K) \setminus \tilde{Y} )\times S^1$. 

Enlarge $C_0$ to a finite complex $C$ of the
form $C=C_1 \times S^1$, where $C_1$ is a finite subcomplex of $(H \cap K) \setminus \tilde{Y} $. Note that
$((H \cap K) \setminus \tilde{Y} )\times S^1 - C$ is homeomorphic to $(((H \cap K) \setminus \tilde{Y} ) -C_1) \times S^1$. 
By assumptions $H \cap K$ is finitely generated and $K$ is locally tame, hence the fundamental group of each component of
$((H \cap K) \setminus \tilde{Y} )-C_1$ is finitely generated,
therefore the fundamental group of each component of

$(((H \cap K) \setminus \tilde{Y}) -C_1)\times S^1$ is finitely generated.
Hence Theorem (Mihalik1) implies that $H$ is tame in $G$.
\end{proof}

\begin{remark}
Note that an infinitely generated subgroup might not be tame in a finitely presented group.
For example, let $F= \langle a,b \rangle$ be a free group of rank $2$ and let
$H=\langle a^n b a^{-n}, n \in Z \rangle$ be a subgroup of $F$. As the fundamental group of the 
complement of any finite subcomplex of $Cayley_2(F,H)$ is infinitely generated, it follows 
that $H$ is not tame in $F$.
\end{remark}

\begin{remark}
Note that in general quasiisometry does not preserve tameness.

Let $A$ be a free abelian group of rank two.
As was noted in Remark 2, the trivial subgroup is tame
in $A$. The Cayley complex of $A$ is homeomorphic to the Euclidean plane.
The Euclidean plane is quasiisometric to its subset $S$ consisting of the 
horizontal lines $\{y=n, n \in Z \}$ and the vertical lines 
$\{ x=n, n \in Z \}$. The set $S$ is homeomorphic to the Cayley complex $Cayley_2(F,F')$,
where $F$ is the free group of rank $2$ and $F'$ is its commutator subgroup.

Hence $Cayley_2(A)=Cayley_2(A,1)$ is quasiisometric to $Cayley_2(F, F')$. However,
as the complement of any compact subset of $S$ in $Cayley_2(F, F')$ has infinitely generated
fundamental group, $F'$ is not tame in $F$.
\end{remark}

\begin{remark}
Let $G$ and $G_0$ be finitely presented groups,
let $H$ be a finitely generated subgroup of $G$, and let $H_0$ 
be a finitely generated subgroup
of $G_0$. 

Assume that $Cayley_2(G,H)$ is quasiisometric to
$Cayley_2(G_0,H_0)$.

It is not known if  $H$ is tame in $G$ if and only if
$H_0$ is tame in $G_0$.
\end{remark}

\begin{remark}
Let $G$ and $G_0$ be finitely presented quasiisometric groups.
Let $H$ be a finitely generated subgroup of a group $G$ and let 
$H_0$ be a finitely generated subgroup of $G_0$. 

Assume that $H$ is quasiisometric to $H_0$. It is not known if $H$ is tame in $G$
if and only if $H_0$ is tame in  $G_0$.

This question is open even in the special case when $G = G_0$.
\end{remark}

\begin{remark}
A finite group is locally tame, because its
Cayley complex is finite.
\end{remark}

\begin{remark}  
A finite index  subgroup $H$ of a finitely presented
group $G$ is tame in $G$ because the complex $Cayley_2(G,H)$ 
is finite.
\end{remark}

\begin{remark}
The trivial subgroup is tame in $Z$ because $Cayley_2(Z)$ is homeomorphic to
a straight line.
As any non-trivial subgroup of $Z$ has finite index in $Z$,
it follows that $Z$ is locally tame.
\end{remark}

\begin{remark}
Free groups are locally tame. Indeed, for any free group $F$ and its
finitely generated subgroup $H$ the complex $Cayley_2(F,H)$
is one-dimensional. When $H$ 
is finitely generated, $Cayley_2(F,H)$ is homotopic to
a wedge of finitely many circles.
\end{remark}
  
The author proved the following result in \cite{Gi}.

\begin{theorem} (Gitik)
Let $N$ be a finitely generated normal subgroup of a 
finitely generated group $G$.
Then $N$ is tame in $G$  if the trivial subgroup 
is tame in the factor group $G/N$.
\end{theorem}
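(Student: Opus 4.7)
The plan is to compare $Cayley_2(G,N)$ with $Cayley_2(G/N)$, exploiting the natural inclusion between them that arises from the finite generation of $N$. First I would choose a finite presentation $\langle X \mid R \rangle$ of $G$ whose generating set $X$ contains a finite generating set $\{n_1,\ldots,n_k\}$ of $N$; adjoining the relations $n_i = 1$ then yields a finite presentation of $G/N$. Passing to presentation $2$-complexes: if $T$ has $\pi_1(T) = G$, attaching $k$ additional $2$-disks $D_1, \ldots, D_k$ along $n_1, \ldots, n_k$ produces a finite $2$-complex $T'$ with $\pi_1(T') = G/N$. On universal covers, $Cayley_2(G/N) = \tilde{T'}$, and the preimage of $T$ under the covering projection is precisely $Cayley_2(G,N)$. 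Hence $Cayley_2(G,N)$ sits inside $Cayley_2(G/N)$ as a subcomplex whose complement is a locally finite disjoint union of open $2$-disks, one for each pair $(v,i)$ where $v$ is a vertex and $i \in \{1,\ldots,k\}$; I shall call these the $n$-disks.

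Given a finite subcomplex $C$ of $Cayley_2(G,N)$, the next step is to enlarge it to a finite subcomplex $\hat C$ of $Cayley_2(G/N)$ by adjoining every $n$-disk whose closure meets $C$. Only finitely many such disks exist because $C$ has finitely many vertices, and each vertex lies on only finitely many $n$-disk boundaries, so $\hat C$ is finite. Setting $C^* := \hat C \cap Cayley_2(G,N)$, a finite subcomplex of $Cayley_2(G,N)$ containing $C$, I may arrange that every $n$-disk of $Cayley_2(G/N)$ is either entirely in $\hat C$ or disjoint from $C^*$. By Theorem (Mihalik1) applied to the finite polyhedron $T'$, the hypothesis that the trivial subgroup is tame in $G/N$ gives that every component $L$ of $Cayley_2(G/N) - \hat C$ has finitely generated fundamental group.

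Every component $K$ of $Cayley_2(G,N) - C^*$ is contained in a unique component $L$ of $Cayley_2(G/N) - \hat C$; after enlarging $\hat C$ once more if necessary to prevent $n$-disks of $L$ from merging distinct components of $Cayley_2(G,N) - C^*$, I may write $L = K \cup \bigcup_\alpha D_\alpha$ with each $D_\alpha$ an $n$-disk attached to $K$ along a loop whose class in $\pi_1(K)$ is conjugate to one of $n_1,\ldots,n_k$. Van Kampen's theorem then identifies $\pi_1(L)$ as the quotient of $\pi_1(K)$ by the normal closure in $\pi_1(K)$ of $\{n_1,\ldots,n_k\}$.

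The hard part will be deducing that $\pi_1(K)$ itself is finitely generated from the finite generation of $\pi_1(L)$, using the fact that the kernel of the surjection $\pi_1(K) \twoheadrightarrow \pi_1(L)$ is normally generated in $\pi_1(K)$ by the finite set $\{n_1,\ldots,n_k\}$. The plan is to lift a finite generating set of $\pi_1(L)$ to elements $\tilde g_1,\ldots,\tilde g_m \in \pi_1(K)$ and argue that the finite set $\{\tilde g_1,\ldots,\tilde g_m,n_1,\ldots,n_k\}$ generates $\pi_1(K)$. Justifying this rigorously requires controlling the conjugating elements used to express an arbitrary element of the kernel as a product of conjugates of the $n_i$; I would do so by exploiting the cocompact action of $G/N$ on $Cayley_2(G,N)$ (as the deck group of the covering $Cayley_2(G,N) \to T$), which permits a Reidemeister--Schreier--style rewriting of loops in $K$ into words in the proposed finite generating set. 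Once $\pi_1(K)$ is known to be finitely generated, Theorem (Mihalik1) applied to the finite polyhedron $T$ completes the proof that $N$ is tame in $G$.
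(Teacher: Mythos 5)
The paper itself gives no proof of Theorem (Gitik) --- it is quoted from \cite{Gi} --- so I can only judge your argument on its own terms. Your setup is sound and is the natural one: with the generators $n_1,\dots,n_k$ of $N$ included in $X$, the complex $Cayley_2(G/N)$ for the presentation $\langle X\mid R\cup\{n_1,\dots,n_k\}\rangle$ is indeed obtained from $Cayley_2(G,N)$ by attaching one $n_i$-disk at every vertex, Theorem (Mihalik1) correctly converts the hypothesis into property $*$ for $Cayley_2(G/N)$, and the van Kampen identification $\pi_1(L)\cong \pi_1(K)/\langle\langle\,[\partial D_\alpha]\,\rangle\rangle$ is legitimate (the saturation of $\hat C$ is unproblematic here precisely because the $n_i$ are single letters, so each $n$-disk is a monogon whose closure meets $Cayley_2(G,N)$ only at its base vertex).

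The genuine gap is the step you yourself label ``the hard part,'' and it is not a finishing detail --- it is the entire content of the theorem. The kernel of $\pi_1(K)\twoheadrightarrow\pi_1(L)$ is the normal closure of the classes $\gamma_\alpha[\partial D_\alpha]\gamma_\alpha^{-1}$ for the infinitely many $n$-disks $D_\alpha\subset L$, where $\gamma_\alpha$ is a path in $K$ out to the (arbitrarily distant) vertex $v_\alpha$; these are infinitely many distinct elements of $\pi_1(K)$, not a fixed finite set. The abstract implication ``$P/M$ finitely generated and $M$ normally generated in $P$ by finitely many elements $\Rightarrow P$ finitely generated'' is false: take $P=A_\infty\times \mathbb{Z}$ with $A_\infty$ the (infinitely generated, simple) group of finitary even permutations and $M=\langle\langle h\rangle\rangle_P=A_\infty$ for any nontrivial $h\in A_\infty$; then $P/M\cong\mathbb{Z}$ but $P$ is not finitely generated. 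So your claim that $\{\tilde g_1,\dots,\tilde g_m,n_1,\dots,n_k\}$ generates $\pi_1(K)$ needs a genuinely geometric argument, and the one you sketch does not supply it: the cocompact deck action of $G/N$ on $Cayley_2(G,N)$ does \emph{not} preserve the component $K$ (deck transformations carry loops of $K$ across and into the deleted finite subcomplex), so it cannot be used to rewrite loops of $K$ or to bound the conjugators $\gamma_\alpha$ within $K$. As written, you have reduced the theorem to an unproved --- and, absent further geometric input, unprovable-in-general --- group-theoretic assertion, which is exactly where all of the difficulty of the theorem is concentrated.
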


Theorem (Gitik) implies the following fact.

\begin{lemma}
A finitely generated abelian group is locally tame.
\end{lemma}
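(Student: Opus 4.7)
The plan is to combine Theorem (Gitik) with Remark 2 in a single short step. Let $G$ be a finitely generated abelian group, and let $H$ be an arbitrary finitely generated subgroup of $G$. I want to show $H$ is tame in $G$.

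First I would observe that since $G$ is abelian, $H$ is automatically normal in $G$, so the quotient group $G/H$ is defined. Since $G$ is finitely generated abelian, $G$ is finitely presented (by the structure theorem for finitely generated abelian groups), and $G/H$ is again a finitely generated abelian group. By Remark 2, the trivial subgroup is tame in the finitely generated abelian group $G/H$.

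Now all hypotheses of Theorem (Gitik) are in place: $G$ is finitely generated, $H$ is a finitely generated normal subgroup of $G$, and the trivial subgroup is tame in $G/H$. Applying Theorem (Gitik) gives that $H$ is tame in $G$. Since $H$ was an arbitrary finitely generated subgroup, $G$ is locally tame by Definition 3.

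There is no real obstacle here beyond checking that the two ingredients apply; the only thing to verify carefully is that the ambient group in the hypothesis of Theorem (Gitik) can be taken to be the finitely generated abelian $G$ (which is automatically finitely presented, as needed to make the notion of tameness well-defined in Definition 1), and that the quotient $G/H$ inherits the property needed to invoke Remark 2, namely being finitely generated abelian. Both are immediate.
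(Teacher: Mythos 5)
Your proof is correct and follows essentially the same route as the paper: apply Remark 2 to the quotient $G/H$ (a finitely generated abelian group) and then invoke Theorem (Gitik), using that every subgroup of an abelian group is normal. The paper adds only an unnecessary case split between finite and infinite $G$, which your argument handles uniformly.
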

\begin{proof}
Any finitely generated abelian group $A$ is a direct product
of the form $A = Z^n \times Tor$, where $Z^n$ is the direct product
of $n$ copies of $Z$ and $Tor$ is a finite abelian group.
If $A$ is finite, it is locally tame by Remark 9.
If $A$ is infinite and $N$ is a subgroup of $A$, the factor group $A/N$ is a finitely
generated abelian group. Remark 2 states that the trivial
subgroup is tame in $A/N$, hence it follows from Theorem (Gitik) that 
$N$ is tame in $A$.
\end{proof}

The following property is closely related to tameness.

\begin{definition}
A $CW$-complex $W$ has property $*$ if for any finite subcomplex $C$ the fundamental group
of each component of $W-C$ is finitely generated.
\end{definition}

\begin{lemma}
A $CW$-complex $W$ has property $*$ if and only if any finite cover of $W$ has property $*$.
\end{lemma}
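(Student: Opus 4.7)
The plan is the following. The backward implication is immediate, since the identity map $W \to W$ counts as a finite cover. For the forward implication, let $p : \tilde W \to W$ be a $d$-fold cover and let $\tilde C \subseteq \tilde W$ be an arbitrary finite subcomplex; I want to show that each component of $\tilde W - \tilde C$ has finitely generated fundamental group.

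The key idea is to enlarge $\tilde C$ to a subcomplex that is saturated under the deck action, so that the covering restricts cleanly to the complement. Set $C = p(\tilde C)$, a finite subcomplex of $W$, and $\tilde C' = p^{-1}(C)$; since $p$ is $d$-sheeted, $\tilde C'$ is a finite subcomplex of $\tilde W$ containing $\tilde C$. The restriction $p : \tilde W - \tilde C' \to W - C$ is a covering map of degree at most $d$. Since $W$ has property $*$, $\pi_1$ of each component of $W - C$ is finitely generated, so $\pi_1$ of each component of $\tilde W - \tilde C'$, being a finite-index subgroup of one of these groups, is itself finitely generated.

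It remains to pass from $\tilde W - \tilde C'$ back to the original complement $\tilde W - \tilde C$. Each component $L$ of $\tilde W - \tilde C$ is obtained by gluing together the components of $\tilde W - \tilde C'$ that lie in $L$ along the cells of the finite relative subcomplex $\tilde C' \setminus \tilde C$; only finitely many components of $\tilde W - \tilde C'$ can meet $L$, since each such component must be incident to a cell of the finite set $\tilde C' \setminus \tilde C$ and the star of a cell in a CW-complex is finite. A Seifert--van Kampen argument applied to suitable open neighborhoods then expresses $\pi_1(L)$ as generated by the finitely generated fundamental groups of these pieces together with finitely many additional loops supported near $\tilde C' \setminus \tilde C$, so $\pi_1(L)$ is finitely generated. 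The main obstacle is this last step: one must arrange the open neighborhoods so that their pairwise (and triple) intersections are path-connected enough for van Kampen to apply cleanly, but no genuinely new idea beyond careful bookkeeping is needed.
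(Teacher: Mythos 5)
Your forward direction is essentially the paper's argument: saturate the finite subcomplex to $p^{-1}(p(\tilde C))$, observe that the components of the complement of the saturation finitely cover components of $W - p(\tilde C)$ and hence have finitely generated fundamental groups, then recover the components of $\tilde W - \tilde C$ by gluing back the finite complex $p^{-1}(p(\tilde C)) - \tilde C$. The paper compresses this last step into one sentence (``$U-C$ is obtained from $U-p^{-1}(p(C))$ by adding a finite complex''), and your van Kampen bookkeeping is a legitimate way to fill it in; just note that your claim that only finitely many components of $\tilde W - p^{-1}(p(\tilde C))$ meet a given component $L$ uses local finiteness of $\tilde W$ (false for general CW-complexes, e.g.\ an infinite wedge of circles, but true in every application here since the complexes are covers of finite complexes -- the paper tacitly assumes the same).

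The genuine gap is in the backward direction. Discharging it via the identity cover proves only the weakest reading of ``any,'' and that is not the statement the paper actually uses: in the proof of Lemma 5 this lemma is invoked to descend property $*$ from the nontrivial finite cover $X_{H_0}$ down to $X_H$. So the content of the ``if'' direction is: if \emph{some} finite cover $p\colon U \to W$ has property $*$, then $W$ has property $*$. This needs an argument, which is the mirror image of your forward one: for a finite subcomplex $K$ of $W$, the preimage $p^{-1}(K)$ is a finite subcomplex of $U$, each component of $U - p^{-1}(K)$ finitely covers a component of $W-K$, so its fundamental group is a finite-index subgroup of the fundamental group of the component downstairs; since a group containing a finitely generated subgroup of finite index is itself finitely generated, each component of $W-K$ has finitely generated fundamental group. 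Without this descent argument your proof does not establish the lemma in the form in which it is applied.
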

\begin{proof}
Assume $W$ has property $*$. Let $U$ be a finite cover of $W$ and let $C$ be a finite subcomplex of $U$.
Let $p:U \rightarrow W$ be the covering map. Then $p(C)$ is a finite 
subcomplex of $W$ and $C \subset p^{-1}(p(C))$. Each component of $U-p^{-1}(p(C))$ is a finite cover of 
a component of $W-C$, hence the fundamental group of each component of $U-p^{-1}(p(C))$ is a finite index subgroup
of a component of $W-C$. As $W$ has property $*$, it follows that the fundamental group of 
each component of $U-p^{-1}(p(C))$ is finitely generated. However $U-C$ is obtained from $U-p^{-1}(p(C))$
by adding a finite complex, hence $U$ has property $*$.

Assume that $U$ has property $*$ and let $K$ be a finite subcomplex of $W$. Then $p^{-1}(K)$ is 
a finite subcomplex of $U$ and each component of $U-p^{-1}(K)$ has a finitely generated fundamental group.
Each component of $W-U$ is finitely covered by a component of $U-p^{-1}(K)$, hence the fundamental group
of each component of $U -p^{-1}(K)$ is a finite index subgroup of a component of $W-K$, hence $W$
has property $*$.
\end{proof}

Mihalik proved the following result in \cite{Mi2}.

\begin{theorem} (Mihalik2)
Let $1 \rightarrow A \rightarrow G \rightarrow B \rightarrow 1$
be a short exact sequence of infinite finitely presented groups,
and let $H$ be a finitely generated subgroup of $A$ of infinite 
index in $A$. Then $H$ is tame in $G$.
\end{theorem}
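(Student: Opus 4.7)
The plan is to invoke Theorem (Mihalik1): choose a convenient finite polyhedron $X$ with $\pi_1 X = G$ and exhibit a single sufficiently large finite subcomplex $C$ of $H \setminus \tilde X$ whose complement has components with finitely generated fundamental groups. The strategy is to build $X$ so that the extension $1 \to A \to G \to B \to 1$ is visible geometrically, then separate complement components into a ``$B$-far'' type and a ``$B$-near'' type.

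First I would realize the extension as a finite $2$-complex. Fix a finite $2$-complex $X_A$ for $A$; for each generator $b$ of $B$ pick a lift $\hat b \in G$ and adjoin a $1$-cell; for each relator of $B$ (rewritten via the chosen section) and each commutation identity $\hat b a \hat b^{-1} = w_{b,a}$ adjoin a $2$-cell. Call the extra cells $W$; then $X = X_A \cup W$ is a finite $2$-complex with $\pi_1 X = G$ and $X_A \hookrightarrow X$ inducing $A \hookrightarrow G$. In the intermediate cover $X_{(A)} = A \setminus \tilde X$, which has $\pi_1 = A$ and carries a free cocompact $B$-action with quotient $X$, normality of $A$ forces the preimage of $X_A$ to split as a disjoint union $\bigsqcup_{b \in B} X_A^b$ of homeomorphic copies indexed by $B$, joined by $W$-lifts arranged along the Cayley graph of $B$. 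Passing further to $Cayley_2(G,H) = H \setminus \tilde X$, each $X_A^b$ pulls back to a single component $Y_H^b$ homeomorphic to $Y_H := H \setminus \widetilde{X_A}$, so $Cayley_2(G,H)$ is a $B$-indexed union of copies of $Y_H$ bridged by $W$-lifts.

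Since $H$ is finitely generated, fix a finite $K \subset Y_H$ with $\pi_1(K) \twoheadrightarrow H$. For a large radius $R$, take $C$ to be the translates of $K$ inside $Y_H^b$ for every $b$ in the ball of radius $R$ in $Cayley_2(B)$, plus all $W$-lifts connecting these copies. Every component of $Cayley_2(G,H) - C$ is then either ``$B$-far'' (meeting some $Y_H^b$ with $b$ outside the $R$-ball) or ``$B$-near'' (sitting inside $Y_H^b - K$ for some $b$ in the ball). A $B$-far component projects under $Cayley_2(G,H) \to Cayley_2(B)$ into the complement of a ball in $Cayley_2(B)$; finite presentability of $B$ lets any loop there be filled by a bounded-area Van Kampen diagram, and lifting the diagram expresses the component's $\pi_1$ in terms of the generators of $H$ together with finitely many $W$-edges.

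The main obstacle is the ``$B$-near'' case, where we need finite generation of $\pi_1$ even though $A$ need not be locally tame and $Y_H - K$ may itself have wild components. This is where the hypothesis $[A:H] = \infty$ becomes indispensable: for any loop $\gamma$ in $Y_H^b - K$, it supplies elements of $A \setminus H$ whose deck action moves $\gamma$ into adjacent sheets of $Cayley_2(G,H) \to X_{(A)}$, where in combination with $\hat b$-conjugation it can be pushed to $B$-infinity and killed by the $B$-relators bundled into $W$. Making this precise---in particular, choosing $R$ large enough that every needed filling disc itself avoids $C$---is the technical heart of the argument.
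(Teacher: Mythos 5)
This theorem is quoted by the paper from Mihalik's article \cite{Mi2} without proof, so there is no in-paper argument to compare yours against; the proposal has to stand on its own, and as written it does not. The geometric model you set up is sound and is the right starting point: realizing the extension by a finite complex $X = X_A \cup W$ and observing that $Cayley_2(G,H)$ decomposes into a $B$-indexed family of covers of $X_A$ (copies of the covers corresponding to conjugates of $H$) bridged by lifts of $W$. But both cases of your dichotomy contain steps that fail. For the ``$B$-far'' case: finite presentability of $B$ does not give bounded-area van Kampen diagrams, and, more importantly, a loop lying over the complement of a ball in $Cayley_2(B)$ cannot in general be filled while staying over that complement --- that is essentially the assertion that the trivial subgroup is tame in $B$, which is not a hypothesis of the theorem. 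For the ``$B$-near'' case, which you yourself flag as the heart of the matter, the sketch does not work as stated: the covering $H\backslash\widetilde{X} \to A\backslash\widetilde{X}$ is regular only when $H$ is normal in $A$, so elements of $A \setminus H$ do not act as deck transformations; and even granting some device for pushing a loop toward $B$-infinity, being homotopic to loops arbitrarily far out does not by itself show that $\pi_1$ of a complement component is finitely generated.

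There is also a structural problem with the dichotomy itself. The set of lifts of $W$ joining the copies $Y_H^b$ for $b$ in the $R$-ball is infinite (each copy is an infinite cover of $X_A$), so your $C$ is either not a finite subcomplex or, if you retain only finitely many bridging cells, the complement has essentially one unbounded component meeting every $Y_H^b$, near and far alike; the components do not separate into ``near'' and ``far'' types. The real content of Mihalik's theorem is precisely to control the fundamental group of this single sprawling component through the interplay of the two infinite directions ($[A:H]=\infty$ giving room inside each $A$-sheet, $B$ infinite giving room transverse to the sheets), and that argument is what is missing. As a map of where the difficulties lie the proposal is reasonable; as a proof it has a genuine gap at its central step.
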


\begin{lemma}
Let $G_0$ be a finite index subgroup of a finitely presented group $G$.
Then a finitely generated subgroup $H$ of $G$ is tame in $G$ if and only if 
$H_0=H \cap G_0$ is tame in $G_0$.
\end{lemma}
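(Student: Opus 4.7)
The plan is to translate both tameness statements into the property $*$ condition on two quotient spaces that differ by a finite cover, then apply Lemma 3 to bridge them.

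First I will fix a finite polyhedron $X$ with $\pi_1(X) = G$ and let $\tilde X$ denote its universal cover. Because $G_0$ has finite index in $G$, the cover $X_0$ of $X$ corresponding to $G_0$ is itself a finite polyhedron with $\pi_1(X_0) = G_0$, and $\tilde X$ serves simultaneously as the universal cover of $X_0$. The standard injection $H/(H \cap G_0) \hookrightarrow G/G_0$ gives $[H:H_0] \leq [G:G_0] < \infty$, so $H_0$ is a finite-index subgroup of the finitely generated group $H$ and hence is itself finitely generated. Since finite-index subgroups of finitely presented groups are finitely presented, $G_0$ is finitely presented, and so the hypotheses of Definition 1 apply equally to the pair $(G_0, H_0)$.

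Next I observe that $H_0 \setminus \tilde X$ is an $[H:H_0]$-fold, hence finite, cover of $H \setminus \tilde X$ via the quotient map induced by the identity on $\tilde X$. Theorem (Mihalik1) applied to $(G, H)$ with the finite polyhedron $X$ says that $H$ is tame in $G$ if and only if $H \setminus \tilde X$ has property $*$; applied to $(G_0, H_0)$ with the finite polyhedron $X_0$, it says that $H_0$ is tame in $G_0$ if and only if $H_0 \setminus \tilde X$ has property $*$. Finally Lemma 3 converts between these two property $*$ conditions, since $H_0 \setminus \tilde X \to H \setminus \tilde X$ is a finite cover. Chaining the three equivalences then proves the lemma.

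No step poses a serious obstacle. The only point requiring a small amount of care is the coincidence of universal covers: the same $\tilde X$ appears on both sides of the argument, and it is this coincidence that allows Lemma 3 to link the two quotient spaces directly, rather than through unrelated polyhedral complexes.
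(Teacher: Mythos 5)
Your proof is correct and follows essentially the same route as the paper: both translate tameness of $H$ in $G$ and of $H_0$ in $G_0$ into property $*$ for $H \setminus \tilde X$ and $H_0 \setminus \tilde X$ via Theorem (Mihalik1), and then pass between these two spaces using the finite-cover invariance of property $*$ (Lemma 3). Your write-up is in fact slightly more careful than the paper's, since you verify that $[H:H_0]$ is finite and that $G_0$ and $H_0$ satisfy the finiteness hypotheses needed to invoke the definitions.
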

\begin{proof}
Let $X$ be a $CW$-complex with $\pi_1(X) = G$ and let $X_0$ be a finite cover of $X$
with $\pi_1(X_0)= G_0$. Let $X_H$ be a cover of $X$ with $\pi_1(X)=H$ and let $X_{H_0}$
be a finite cover of $X_H$ with $\pi_1(X_{H_0})=H_0$.

Consider the following commutative diagram of covering spaces, where the maps are covering projections.

\[
\begin{CD}
X_{H_0}  @>>> X_0  \\
@VVV          @VVV\\
X_H      @>>> X
\end{CD}
\]

Assume that $H$ is tame in $G$. Theorem (Mihalik1) implies that $X_H$ has property $*$.
Then Lemma 4 implies that $X_{H_0}$ has property $*$ and Theorem (Mihalik1) implies that
$H_0$ is tame in $G_0$.

Assume that $H_0$ is tame in $G_0$. Theorem (Mihalik1) implies that $X_{H_0}$ has property $*$.
Then Lemma 4 implies that $X_H$ has property $*$, and Theorem (Mihalik1) implies that $H$ is tame in $G$.
\end{proof}

The author proved the following result in \cite{Gi}.

\begin{lemma} (Gitik)
Let $H$ be a subgroup of a group $G$, and let $H_0$
be a finite index subgroup of $H$. Then $H$ is tame in $G$ if and only if
$H_0$ is.
\end{lemma}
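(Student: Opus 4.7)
The plan is to reduce the claim to the translation between tameness of a subgroup and property $*$ of the quotient of the universal cover, and then invoke Lemma 4, which transfers property $*$ across finite coverings.

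First, I would fix a finite $2$-complex $X$ with $\pi_1(X) = G$ and universal cover $\tilde{X}$. Writing $X_H = H \setminus \tilde{X}$ and $X_{H_0} = H_0 \setminus \tilde{X}$, the inclusion $H_0 \hookrightarrow H$ induces a covering projection $X_{H_0} \to X_H$. Since $H_0$ has finite index in $H$, this covering has degree $[H : H_0] < \infty$, so $X_{H_0}$ is a finite cover of $X_H$. (Note that $H$ being finitely generated is equivalent to $H_0$ being finitely generated, under the finite index hypothesis, so the notion of tameness is well-defined on both sides.)

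Next, by Theorem (Mihalik1), $H$ is tame in $G$ if and only if $X_H$ has property $*$, and $H_0$ is tame in $G$ if and only if $X_{H_0}$ has property $*$. Lemma 4 then applies directly to the finite covering $X_{H_0} \to X_H$, yielding that $X_H$ has property $*$ if and only if $X_{H_0}$ does. Combining these two equivalences gives the desired biconditional.

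There is no real obstacle; the entire argument is essentially a bookkeeping chain through the two intermediate results already established, and the only thing to check carefully is that the cover $X_{H_0} \to X_H$ has finite degree, which is immediate from $[H : H_0] < \infty$.
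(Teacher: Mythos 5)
Your argument is correct: the covering $X_{H_0}\to X_H$ has degree $[H:H_0]<\infty$, so Theorem (Mihalik1) together with Lemma 4 gives the biconditional, and you rightly note that finite generation passes between $H$ and $H_0$. The paper itself states this lemma without proof (citing \cite{Gi}), but your argument is exactly the technique the paper uses for the companion result on finite-index subgroups of $G$ (the lemma proved via the commutative diagram of covering spaces), so it is entirely in the spirit of the paper and introduces no circularity, since Lemma 4 is proved independently.
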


Theorem (Mihalik2), Theorem (Gitik), and Lemma (Gitik) imply the following results.

\begin{remark}
Let $N$ be a normal finitely generated subgroup of a finitely presented
group $G$.
\begin{enumerate}

\item  
If $G/N$ is infinite, then any finitely generated subgroup
of $N$ is tame in   $G$, provided 
the trivial subgroup is tame in $G/N$.
 
Indeed, Lemma (Gitik)  and Theorem (Gitik) imply the result for finite-index
subgroups of $N$, and Theorem (Mihalik2) implies  the result for 
infinite-index subgroups of $N$, provided  $G/N$ is infinite.

\item 
Lemma (Gitik) implies that if $G/N$ is finite, then any finitely generated subgroup of $N$
is tame in $G$ if and only if it is tame in $N$.
\end{enumerate}
\end{remark}

\begin{lemma}
Let $H$ and $K$ be infinite finitely presented groups, and let $G = H \times K$.
Let $S$ be a finitely generated subgroup of $G$. If the trivial subgroup is tame in $H$
and in $K$, then $S \cap H$ and $S \cap K$ are tame in $G$. 
\end{lemma}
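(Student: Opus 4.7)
The plan is to exploit that both $H$ and $K$ are normal subgroups of $G = H \times K$, so that we obtain the short exact sequence of infinite finitely presented groups
\[
1 \to H \to G \to K \to 1,
\]
together with the symmetric one in which $H$ and $K$ are swapped. These sequences are precisely of the type to which Theorem (Gitik) and Theorem (Mihalik2) apply, so the strategy is to reduce to one of those two results depending on the index of $S \cap H$ in $H$.

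First I would handle the factors themselves. Since $H$ is finitely generated and normal in $G$ with quotient $K$, and the trivial subgroup is tame in $K$, Theorem (Gitik) gives that $H$ is tame in $G$; by symmetry, $K$ is tame in $G$. Lemma (Gitik) then shows that any finite-index subgroup of $H$ (respectively of $K$) is also tame in $G$.

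For $S \cap H$ (the argument for $S \cap K$ is identical), I would split on the index $[H : S \cap H]$. If this index is finite, then $S \cap H$ is finitely generated, being a finite-index subgroup of the finitely generated group $H$, and the observation above already gives that $S \cap H$ is tame in $G$. If instead $S \cap H$ has infinite index in $H$ and is finitely generated, then Theorem (Mihalik2) applied to $1 \to H \to G \to K \to 1$ with the subgroup $S \cap H$ of $H$ yields tameness of $S \cap H$ in $G$ directly.

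The main point requiring care is the finite-generation hypothesis implicit in both Lemma (Gitik) and Theorem (Mihalik2). A finitely generated subgroup $S$ of $H \times K$ need not meet a direct factor in a finitely generated subgroup (for example, in $F_{2} \times \mathbb{Z}$ the subgroup generated by $(a,1)$ and $(b,t)$ meets the free factor in the infinitely generated kernel of the exponent-sum map), so I read the lemma as asserting tameness once $S \cap H$ and $S \cap K$ are themselves finitely generated, which is the case the case-split above covers.
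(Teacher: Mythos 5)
Your proof is correct and takes essentially the same route as the paper: the short exact sequence $1 \to H \to G \to K \to 1$, Theorem (Mihalik2) when $S \cap H$ has infinite index in $H$, and Lemma (Gitik) combined with Theorem (Gitik) when the index is finite. Your caveat about finite generation of $S \cap H$ is well taken --- the paper's own proof applies Theorem (Mihalik2) without comment, even though that theorem requires the subgroup to be finitely generated and your $F_2 \times \mathbb{Z}$ example shows the intersection with a factor can fail to be so; the lemma must indeed be read as implicitly assuming $S \cap H$ and $S \cap K$ are finitely generated (as Definition 1 of tameness already presupposes).
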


\begin{proof}
Consider the short exact sequence
$1 \rightarrow H \rightarrow G \rightarrow K \rightarrow 1$.
If $S \cap H$ has infinite index in $H$, the result follows from Theorem (Mihalik2).
If $S \cap H$ has finite index in $H$, Lemma (Gitik) implies that $S \cap H$ is tame in $G$
if and only if $H$ is tame in $G$. Theorem (Gitik) implies that $H$ is tame in $G$ if the
trivial subgroup is tame in $G/H = K$.
\end{proof}

\begin{lemma}
Let $H$ and $K$ be finitely generated groups and let $G = H \times K$.
If the trivial subgroup is tame in $H$ and in $K$ then it is tame in $G$.
\end{lemma}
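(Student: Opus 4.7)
The plan is to reduce to the preceding lemma by a short case analysis. First I would observe that the hypothesis that the trivial subgroup is tame in $H$ and in $K$ forces both groups to be finitely presented, since tameness is only defined for finitely generated subgroups of finitely presented ambient groups. So $H$ and $K$ are finitely presented, and all earlier results in this section are available.

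The main case is when $H$ and $K$ are both infinite. Here I would apply the preceding lemma directly, taking $S = \{1\}$, a (trivially) finitely generated subgroup of $G$. Then $S \cap H = \{1\}$ and $S \cap K = \{1\}$, and the assumptions of that lemma are precisely our assumptions; its conclusion that $S \cap H$ is tame in $G$ gives exactly the desired statement that the trivial subgroup is tame in $G$.

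If instead one of the factors is finite, say $H$, then the subgroup $\{1\} \times K \cong K$ has finite index $|H|$ in $G$, so Remark 4 (applied to this copy of $K$ as a finite-index subgroup of $G$) says that the trivial subgroup is tame in $G$ if and only if it is tame in $K$, and the latter holds by hypothesis. The case when $K$ is finite is symmetric. No step is really an obstacle here: the whole argument is a case split that dispatches each case with an already-proved result, with the only mild subtlety being the need to check the infiniteness hypothesis of the preceding lemma and to handle the finite-factor case separately via Remark 4.
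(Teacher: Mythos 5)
Your proof is correct and follows essentially the same route as the paper: a case split on whether the factors are infinite, with the infinite case resting on Theorem (Mihalik2) (which you reach via Lemma~6 applied to $S=\{1\}$, whose proof in that case is exactly this application of Mihalik2) and the finite-factor case on Remark~4. The only cosmetic difference is that the paper handles the case where both factors are finite separately via Remark~9, whereas your Remark~4 argument absorbs it.
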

\begin{proof}
Indeed, if $H$ and $K$ are infinite the result follows from Theorem (Mihalik2).
If both $H$ and $K$ are finite, the result follows from Remark 9. 
If $H$ is finite and $K$ is infinite the result follows from Remark 4.
\end{proof}

\begin{remark}
Let $G=H \times Z$. As the trivial subgroup is tame in $Z$,
Theorem (Gitik) implies that $H$ is tame in $G$.

Let $H_0$ be a finitely presented subgroup of $H$.
If $H_0$ is of infinite index in $H$, Theorem (Mihalik2) states that $H_0$ is tame in $G$.
If $H_0$ is of finite index in $H$, Lemma (Gitik) states that $H_0$ is tame in $G$.
\end{remark}

\section{Proof of Theorem 2}

We need the following notation.

Let $X^* = \{x,x^{-1} |x \in X \}$. For  $x \in X$ define $(x^{-1})^{-1} =x$.
 
Let $G$ be a group generated by a set $X$ and let $H$ be a subgroup of $G$.
Let $\{Hg \}$ be the set of right cosets  of $H$ in $G$.
 
The coset graph of $G$  with respect to $H$, denoted $Cayley(G,H)$,  is the oriented graph 
whose vertices are the cosets $\{Hg \}$, the set of edges is $ \{Hg \} \times X^*$, 
and an edge $ (Hg, x)$ begins at the vertex  $Hg$ and ends at the 
vertex $Hgx$. Denote the Cayley graph of $G$ by $Cayley(G)$. 
Note that $Cayley(G,H)$ is the quotient of $Cayley(G)$ by left 
multiplication by $H$. Also note that  the $1$-skeleton of $Cayley_2(G)$ is
$Cayley(G)$, and the $1$-skeleton of $Cayley_2(G,H)$ is $Cayley(G,H)$.

Let $G$ be generated by a disjoint union of sets $X$ and $Y$.
We call a subset $C$ of $Cayley(G,H)$ an $X$-component, if all edges of $C$
have the form $(Hg, x)$ with $x \in X^*$. We call $C$ a $Y$-component if all edges of $C$
have the form $(Hg, y)$ with $y \in Y^*$.

\begin{proof}
Let $S$ be a finitely generated subgroup of $G=H*K$, where $H$ is generated by the set $X$ and 
$K$ is generated by the set $Y$. We assume that $X$ and $Y$ are disjoint.

The Kuros' subgroup theorem states that $S$ is a free product of a free group with subgroups
of conjugates of $H$ or $K$. It follows that $S$ is the fundamental group of a graph of groups, where the vertex groups are subgroups 
of conjugates of $H$ or $K$ and the edge groups are trivial, cf.\cite{S-W}. 

Note that the complex $Cayley_2(G,S)$ has the following structure.
All maximal $X$-components of $Cayley_2(G,S)$, except for a finite number, are homeomorphic to 
$Cayley_2(H)$. The remaining finitely many maximal $X$-components of $Cayley_2(G,S)$ are homeomorphic 
to $Cayley_2(H, H_i)$, where $H_i, 1 \le i \le n$ is a finitely generated subgroup of $H$. All maximal $Y$-components of
$Cayley_2(G,S)$, except for a finite number, are homeomorphic to 
$Cayley_2(K)$. The remaining finitely many maximal $Y$-components of $Cayley_2(G,S)$ are homeomorphic 
to $Cayley_2(K, K_i)$, where $K_i, 1 \le i \le n$ 
is a finitely generated subgroup of $K$.

Let $C$ be a compact subset of $Cayley_2(G,S)$. Then $C$ has non-empty intersection
with only finitely many maximal $X$-components and maximal $Y$-components of $Cayley_2(G,S)$.
As $H$ and $K$ are locally tame, the fundamental groups of each component of the complement of $C$ in any maximal $X$-component 
and any maximal $Y$-component is finitely generated. Hence the fundamental group of $Cayley_2(G,S) - C$ is finitely generated.
\end{proof}

\section{Open Questions}

\begin{enumerate}
\item

Let $G$ be a finitely presented group which is a direct product
of subgroups $H$  and $K$, and let $S$ be a 
finitely presented subgroup of $G$ such that $S \cap H$ and $S \cap K$ 
are finitely generated. Is it true that $S$ is tame in $G$ if and only if $S \cap H$ is tame in $H$ and
$S \cap K$ is tame in $K$? 
  
\item  
If $H$ is locally tame, does it follow that a cyclic extension of $H$ is locally tame?

A special case of this question: is $G=H \times Z$ locally tame?

Are finitely generated polycyclic groups,  in particular, finitely generated 
nilpotent groups, locally tame?

\item
Is the trivial subgroup tame in the Thompson group $F$ (cf. \cite{C-F}) with the infinite
presentation 
$ \langle x_0,x_1, x_2, \cdots |x_k^{-1}x_nx_k=x_{n+1}, k <n \rangle$?

Is the trivial subgroup tame in the Thompson group $F$ with the finite presentation
$\langle A, B|[AB^{-1},A^{-1}BA], [AB^{-1}, A^{-2}BA^2 \rangle $?
   
\item  Let $N$ be a finitely generated normal subgroup of a finitely presented group $G$.
Assume that $N$ and $G/N$ are locally tame and $N$ has fgip in $G$.
Is $G$ locally tame?

\end{enumerate} 

\begin{remark}
Recall that a finitely generated subgroup $K$ has fgip in $G$  if
the intersection of $K$ with any finitely generated subgroup of $G$
is finitely generated.

The following result would follow from an affirmative answer to Question 1.

Let $G,H$ and  $K$  be as in Question 1. Assume that  $H$ 
and $K$ are locally tame. If  $H$ and $K$ have fgip in $G$,
then $G$ is locally tame.
  
Indeed, let $S$ be a finitely generated subgroup of $G$. As $H$ and $K$ have fgip in $G$,
the intersections $S \cap H$ and $S \cap K$ are finitely generated. As $H$ and $K$
are locally tame, it follows that $S \cap H$ is tame in $H$ and $S \cap K$ is tame in $K$.
Hence the affirmative answer to Question 1 would imply that $S$ is tame in $G$. As $S$ is an arbitrary finitely
generated subgroup of $G$, it follows that $G$ is locally tame.
\end{remark}
 
\section{Acknowledgment}
The author wants to thank Peter Scott for helpful conversations.

\end{document}